\newtheorem{theorem}{Theorem}
\newtheorem{proposition}{Proposition}
\newtheorem{lemma}{Lemma}
\title{A continuation of solutions to convolution equations with the loss of smoothness}
\author{Anastasiia Minenkova}
\address{Department of Mathematics, The University of Mississippi, Oxford, MS, USA}
\email{aminenko@olemiss.edu}
\begin{document}
\begin{abstract}
In the present paper the smoothness loss of a continuation of solutions to
convolution equations is studied. Also examples for some kinds of
convolvers are given.
\end{abstract}
\maketitle

%\Received{date}

%\notes{0}{\subclass{ 46F10,  30B40, 45A05}
%\keywords{homogeneous convolution equations, mean periodic functions, zeros of entire functions} }

\section{Introduction}

The theory of mean periodic functions is a subject that goes back
to works of J. Littlewood, J. Delsarte, F. John. As a matter of fact, mean periodic functions were introduced by J. Delsarte~\cite {D} and then refined by L. Schwartz~\cite {S}. After that, the theory has undergone vigorous development and, in particular, there has been much progress
on the matter that influenced the development of some problems concerning local aspects of spectral analysis
and spectral synthesis on homogeneous spaces. In fact, one of such problems has motivated the research presented in this note. More precisely, our interest comes from the fact that mean periodic functions appear as solutions to homogeneous convolution equations, which turn out to be an efficient tool in many areas of mathematics and applications. Moreover, as one can see from
papers by M. Agranovsky, P. Kuchment, L. Zalcman, V.V. Volchkov, Vit.V. Volchkov and many others, the
study of these problems is closely related to a variety of questions in harmonic analysis, complex analysis, partial differential equations, integral
geometry, approximation theory, probability theory, stochastic processes, etc. (see~\cite {AK}--\cite {V4}). A
good introduction to the subject of mean periodic functions and convolution equations is a book by V.V. Volchkov and
Vit.V. Volchkov~\cite {V3}.

In this paper we are going to focus on the problem of continuation of solutions to homogeneous convolution equations. Also, in Section 4 we'll discuss explicit examples for
some kinds of convolvers.

This work continues the study of the properties of exponential polynomials and the behavior of extensions of solutions to convolution equations (see ~\cite{L}--\cite {M2}).

\section{Notations and Auxiliary Statements}

Let  $\mathcal{D}'(a, b)$ be a class of all distributions. Let $T\in\mathcal{E}'(\mathbb{R}^1)$, $ T~\neq~0$, where $\mathcal{E}'(\mathbb{R}^1)$ is the space of compact supported distributions. Let \[\mathrm{supp}\,T~=~[-r(T),r(T)].\]

Suppose that
\[
-\infty\leq a<b\leq +\infty, \,b-a>2r(T).
\]

Let us introduce the following notation:
\[
(a,b)_T=\{t\in\mathbb{R}^1: t-\mathrm{supp}\,T\subset(a,b)\}.
\]

We denote by $\mathcal{D}_T' (a, b)$ the class of all distributions  $f \in \mathcal{D}'(a, b)$ that are solutions of the convolution equation
\begin{equation*}\label{eq1}
(f*T)(t)=0, \quad t\in(a,b)_T.
\end{equation*}
Such $f$ are also called mean periodic.

Moreover,
\[
C_T^k(a,b)=(\mathcal{D}_T'\cap C^k)(a,b) \text{ for }k\in \mathbb{Z}_+ \text{ or }k=\infty.
\]

Let $\widehat{T}=\langle T,e^{-izt}\rangle$ be the Fourier transform of $T$ and let $\mathcal{Z}(\widehat{T})$ be
the set of zeroes of  $\widehat{T}$. For $\lambda\in\mathcal{Z}(\widehat{T})$ denote 
$m(\lambda,T)=n_\lambda(\widehat{T})-1$, where $n_\lambda(\widehat{T})$ ($n_\lambda=n_\lambda(\widehat{T})$) is the multiplicity of the zero $\lambda$ of $\widehat{T}$.

 There is a well-known fact from the theory of entire functions that for each
$\varepsilon>0$ one has
\begin{equation*}\label{eq1_1}
{\underset{\lambda\in\mathcal{Z}(\widehat{T})}{\sum}}\frac{n_\lambda}{|\lambda|^{1+\varepsilon}}<+\infty.
\end{equation*}

Let us define the sequence
$\{a_j^{\lambda,\eta}(\widehat{T})\},\,j=0,...,m(\lambda,T)$ as follows
\[
a_0^{\lambda,\eta}(\widehat{T})=\frac{n_\lambda!\delta_{0,\eta}}{\widehat{T}^{(n_\lambda)}(\lambda)},
\]
\[
a_j^{\lambda,\eta}(\widehat{T})=\frac{n_\lambda!}{\widehat{T}^{(n_\lambda)}(\lambda)}\left(\frac{\delta_{j,\eta}}{j!}-
\overset{j-1}{\underset{s=0}{\sum}}
a_s^{\lambda,\eta}(\widehat{T})\frac{\widehat{T}^{(n_\lambda-s+j)}(\lambda)}{(n_\lambda-s+j)!}
\right),\; j\geq 1,
\]
where $\delta_{j,\eta}$ is the Kronecker symbol.

Introduce the function
\[
\sigma_\lambda(\widehat{T})=\overset{m(\lambda,T)}{\underset{j=0}{\sum}}
|a_j^{\lambda,0}(\widehat{T})|.
\]

In what follows, we need the following entire function
\[
a^{\lambda,\eta}(\widehat{T},z)=\overset{m(\lambda,T)}{\underset{j=0}{\sum}}a_j^{\lambda,\eta}(\widehat{T})\frac{\widehat{T}(z)}{(z-\lambda)^{n_\lambda-j}}.
\]

For $z\in\mathbb{C},m\in\mathbb{Z}_+, t\in\mathbb{R}^1$ we denote
\[
e^{z,m}(t)=(it)^m e^{izt}.
\]

Let $T\in\mathcal{E}'(\mathbb{R}^1)$, $ T\neq 0$,
$\lambda\in\mathcal{Z}(\widehat{T})$, $\eta\in
\{0,...,m(\lambda,T)\}$ and $f\in \mathcal{D}_T'(a,b)$. One can show that for some $c_{\lambda,\eta}(T,f)\in\mathbb{C}$
the following equality holds
\[
f\ast T_{\lambda,0} =
\overset{m(\lambda,T)}{\underset{\eta=0}{\sum}}c_{\lambda,\eta}(T,f)e^{\lambda,\eta},
\]
where the convolution is considered in $(a,b)_T$ and
$T_{\lambda,0}\in\mathcal{E}'(\mathbb{R}^1)$ is defined in the following way
\[
r(T_{\lambda,0})=r(T),
\]
and
\[
\widehat{T}_{\lambda,0}(z) =
a^{\lambda,0}(\widehat{T},z),\,z\in\mathbb{C}.
\]

\begin{proposition}\label{st1}$($\cite[Theorem 3.9(ii)]{V3}$)$ Let $T\in\mathcal{E}'(\mathbb{R}^1),\, T\neq 0$ and $f\in
\mathcal{D}'(a,b)$. Suppose that
\[
f={\underset{\lambda\in\mathcal{Z}(\widehat{T})}{\sum}}\overset{m(\lambda,T)}{\underset{\eta=0}{\sum}}c_{\lambda,\eta}e^{\lambda,\eta},
\]
where $c_{\lambda,\eta}\in\mathbb{C}$ and that the series converges in 
$\mathcal{D}'(a,b)$. Then $f\in \mathcal{D}_T'(a,b)$ and
$c_{\lambda,\eta}=c_{\lambda,\eta}(T,f)$.
\end{proposition}

Let us set for $\lambda\in\mathcal{Z}(\widehat{T})$, $R>0$ and
$q\in\mathbb{Z}_+$:
\begin{equation}\label{eqB}
B(R,\lambda,q)=
  \begin{cases}
    R^{m(\lambda,T)} & \text{if }{R>1}, \\
    m(\lambda,T)+1  & \text{if }{R=1}, \\
    \min\{q+1,m(\lambda,T)+1\} & \text{if }{R<1}.
  \end{cases}
\end{equation}

The following results, that were obtained in~\cite {V3},  will be useful for us in order to prove the main result.

\begin{proposition}\label{st2}$($\cite[Proposition 2.27(ii)]{V3}$)$
Let $r>0$ and let
\begin{equation}\label{eq2}
\underset{\lambda\in\mathcal{Z}(T)}{\sum}
\left(\underset{0\leq\eta\leq
m(\lambda,T)}{\max}|c_{\lambda,\eta}|\right)
B(r,\lambda,k)(|\lambda|+1)^k e^{r|\mathrm{Im}\lambda|}<+\infty,
\end{equation}
for some $k\in \mathbb{Z}_+$. Then the series
\begin{equation}\label{eq12}
\underset{\lambda\in\mathcal{Z}(T)}{\sum}\overset{m(\lambda,T)}{\underset{\eta=0}{\sum}}
c_{\lambda,\eta}e^{\lambda,\eta}
\end{equation}
converges in $C^k[-r,r]$.
\end{proposition}

\begin{proposition}\label{st3} $($\cite[Theorem 3.10]{V3}$)$
Let $T\in\mathcal{E}'(\mathbb{R}^1)$, $T\neq 0$. Then the following hold true.
\begin{itemize}
  \item [(i)] Let $f\in\mathcal{D}'_T(a,b)$and let $p$ be a nonzero polynomial. Then there exist $\gamma_1,\gamma_2>0$ independent of $f$ such that for all $\lambda\in \mathcal{Z} (\widehat{T})$,
  $|\lambda|>\gamma_1$ the following estimate holds
\begin{equation*}\label{eqst1}
\underset{0\leq\eta\leq
m(\lambda,T)}{\max}|c_{\lambda,\eta}(T,f)|\leq
\frac{\gamma_2}{|p(i\lambda)|}\underset{0\leq\eta\leq
m(\lambda,T)}{\max}\left|c_{\lambda,\eta}\left(T,p\left(\frac{d}{dt}\right)f\right)\right|.
\end{equation*}
  \item [(ii)]
 Let $k\in\mathbb{Z}_+$, $f\in C_T^k(-R,R)$ and $R>r(T)$. Then there exist  $\gamma_3$,$\gamma_4$, $\gamma_5$, $\gamma_6>0$
independent of $\,k$ and $f$ such that for all $\lambda\in\mathcal{Z}(\widehat{T}):\,|\lambda|>\gamma_3$ 
\begin{equation}\label{eq3}
\underset{0\leq\eta\leq
m(\lambda,T)}{\max}|c_{\lambda,\eta}|\leq\gamma_4^{k+1}|\lambda|^{\gamma_5-k}\sigma_\lambda(\widehat{T})
\left(\int_{-r(T)}^{r(T)}
|f^{(k)}(t)|dt+\gamma_6^k\gamma_7\right),
\end{equation}
where $\gamma_7>0$ is independent of $k$, $\lambda$.
\end{itemize}
\end{proposition}

Next, we are going to prove the technical lemma.
\begin{lemma}\label{l21}
Let $T\in\mathcal{E}'(\mathbb{R}^1)$, $T\neq 0$ and let \eqref{eq2} be true for some
$k\in\mathbb{N}$ and $r>0$. If for $R>r$
there exists $N\in\mathbb{N}:\;|\lambda|>N$ such that
\begin{equation}\label{eql21}
\underset{|\lambda|>N}{\sup}
\frac{|\mathrm{Im}\lambda|+m(\lambda,T)}{\ln(2+|\lambda|)}<\frac{1}{R-r},
\end{equation}
then \eqref{eq12} converges in $C^{k-1}[-R,R]$.

\end{lemma}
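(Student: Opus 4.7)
The plan is to reduce the claim directly to Statement \ref{st2}, applied with $k-1$ and $R$ in place of $k$ and $r$. Concretely, this reduces the lemma to verifying
\[
\sum_{\lambda\in\mathcal{Z}(\widehat{T})}\Big(\max_{0\le\eta\le m(\lambda,T)}|c_{\lambda,\eta}|\Big)B(R,\lambda,k-1)(|\lambda|+1)^{k-1}e^{R|\mathrm{Im}\lambda|}<\infty.
\]
Since the hypothesis (\ref{eq2}) already provides the analogous sum finite at $(k,r)$, it suffices to bound the ratio of the new summand to the old by a constant, uniformly for $|\lambda|>N$; the finitely many $\lambda$ with $|\lambda|\le N$ contribute only a finite amount and may be ignored.

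From (\ref{eql21}) I first extract a constant $\alpha<1/(R-r)$ such that $m(\lambda,T)+|\mathrm{Im}\lambda|\le\alpha\ln(2+|\lambda|)$ for all $|\lambda|>N$. Writing the ratio of summands as
\[
\rho(\lambda)=\frac{B(R,\lambda,k-1)}{B(r,\lambda,k)}\cdot\frac{e^{(R-r)|\mathrm{Im}\lambda|}}{|\lambda|+1},
\]
the exponential factor is immediately controlled by $e^{(R-r)|\mathrm{Im}\lambda|}\le(2+|\lambda|)^{\alpha(R-r)}$, with exponent strictly below $1$; the surplus factor $1/(|\lambda|+1)$ coming from the drop in smoothness is precisely what supplies the margin.

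The main work, and the anticipated obstacle, is the $B$-ratio, which forces a case split according to where $R$ and $r$ sit relative to $1$. When $R>r\ge 1$ the ratio equals $(R/r)^{m(\lambda,T)}$; combining with the exponential and using the elementary bound $\ln(R/r)\le R-r$ (which for $r\ge 1$ follows from $R/r=1+(R-r)/r\le 1+(R-r)\le e^{R-r}$) yields
\[
(R/r)^{m(\lambda,T)}e^{(R-r)|\mathrm{Im}\lambda|}\le e^{(R-r)(m(\lambda,T)+|\mathrm{Im}\lambda|)}\le(2+|\lambda|)^{\alpha(R-r)}.
\]
The case $R\ge 1>r$ is treated in the same spirit, replacing $\ln(R/r)$ by $\ln R$ and invoking $\ln R\le R-1\le R-r$; at $R=1$ one also picks up the benign factor $m(\lambda,T)+1=O(\ln(2+|\lambda|))$. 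When $r<R<1$, the $B$-ratio is at most $1$ directly from the definition.

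In every regime I thus obtain $\rho(\lambda)\le C\ln(2+|\lambda|)(2+|\lambda|)^{\alpha(R-r)-1}$, which tends to $0$ because $\alpha(R-r)<1$. Hence $\sup_{|\lambda|>N}\rho(\lambda)<\infty$, the transferred summability condition holds, and Statement \ref{st2} delivers convergence of the series in $C^{k-1}[-R,R]$.
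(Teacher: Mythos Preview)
Your proposal is correct and follows essentially the same route as the paper: both arguments compare the summand at $(R,k-1)$ with the one at $(r,k)$ term by term, control the ratio via the hypothesis~(\ref{eql21}) by splitting into cases according to the position of $r$ and $R$ relative to $1$, and then invoke Statement~\ref{st2}. Your case analysis is in fact a bit tidier---it explicitly covers the regime $R>1>r$, which the paper's four cases overlook---and your use of $\ln(R/r)\le R-r$ for $r\ge 1$ makes transparent why the single unit of smoothness lost suffices.
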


\begin{proof} 
Let $|\lambda|>N$, where $N$ is such that \eqref{eql21} is true. Consider the following casesapplied to \eqref{eqB}.
\begin{itemize}
  \item [1)] Let $R>r>1$.  If $q\leq
  k-\frac{(R-r)|\mathrm{Im}\lambda|+\ln\frac{R}{r}m(\lambda,T)}{\ln(1+|\lambda|)}$
  then we get
  \begin{equation}\label{eqlp1}
  \frac{B(R,\lambda,q)(|\lambda|+1)^q
e^{R|\mathrm{Im}\lambda|}}{B(r,\lambda,k)(|\lambda|+1)^k
e^{r|\mathrm{Im}\lambda|}}=\frac{R^{m(\lambda,T)}(|\lambda|+1)^q
e^{R|\mathrm{Im}\lambda|}}{r^{m(\lambda,T)}(|\lambda|+1)^k
e^{r|\mathrm{Im}\lambda|}}\leq 1.
\end{equation}
  \item [2)] Let $R>r=1$. If $q\leq
  k-\frac{(R-1)|\mathrm{Im}\lambda|+m(\lambda,T)\ln{R}-\ln(m(\lambda,T)+1)}{\ln(1+|\lambda|)}$
  then one has
  \begin{equation}
  \frac{B(R,\lambda,q)(|\lambda|+1)^q
e^{R|\mathrm{Im}\lambda|}}{B(r,\lambda,k)(|\lambda|+1)^k
e^{r|\mathrm{Im}\lambda|}}=\frac{R^{m(\lambda,T)}(|\lambda|+1)^q
e^{R|\mathrm{Im}\lambda|}}{{(m(\lambda,T)+1)}(|\lambda|+1)^k
e^{|\mathrm{Im}\lambda|}}\leq 1.
\end{equation}
  \item [3)] Let $R=1>r$.  If $q\leq
  k-\frac{(1-r)|\mathrm{Im}\lambda|-m(\lambda,T)\ln{r}}{\ln(1+|\lambda|)}$
  then we arrive at
  \begin{equation}
  \frac{B(R,\lambda,q)(|\lambda|+1)^q
e^{R|\mathrm{Im}\lambda|}}{B(r,\lambda,k)(|\lambda|+1)^k
e^{r|\mathrm{Im}\lambda|}}=\frac{{m(\lambda,T)}(|\lambda|+1)^q
e^{|\mathrm{Im}\lambda|}}{\min\{k+1,{m(\lambda,T)}\}(|\lambda|+1)^k
e^{r|\mathrm{Im}\lambda|}}\leq 1.
\end{equation}
  \item [4)] Let $1>R>r$.  If $q\leq
  k-\frac{(R-r)|\mathrm{Im}\lambda|+\ln\frac{R}{r}m(\lambda,T)}{\ln(1+|\lambda|)}$
  then one has
  \begin{equation}\label{eqlp4}
  \frac{B(R,\lambda,q)(|\lambda|+1)^q
e^{R|\mathrm{Im}\lambda|}}{B(r,\lambda,k)(|\lambda|+1)^k
e^{r|\mathrm{Im}\lambda|}}=\frac{(|\lambda|+1)^q
e^{R|\mathrm{Im}\lambda|}}{(|\lambda|+1)^k
e^{r|\mathrm{Im}\lambda|}}\leq 1.
\end{equation}
\end{itemize}

By looking at \eqref{eql21} we can say that if $q=k-1$ then
\eqref{eqlp1}--\eqref{eqlp4} hold true. Hence, the following transformations are true for arbitrary $R$ and $r$ ($R>r$)
\begin{equation}\label{eql}
\underset{\lambda\in\mathcal{Z}(T)}{\sum}
\left(\underset{0\leq\eta\leq
m(\lambda,T)}{\max}|c_{\lambda,\eta}|\right)
B(R,\lambda,q)(|\lambda|+1)^q e^{R|\mathrm{Im}\lambda|}
\end{equation}
\[=\underset{\lambda\in\mathcal{Z}(T)}{\sum}
B(r,\lambda,k)(|\lambda|+1)^k e^{r|\mathrm{Im}\lambda|}
\frac{B(R,\lambda,q)(|\lambda|+1)^q
e^{R|\mathrm{Im}\lambda|}}{B(r,\lambda,k)(|\lambda|+1)^k
e^{r|\mathrm{Im}\lambda|}}\underset{0\leq\eta\leq
m(\lambda,T)}{\max}|c_{\lambda,\eta}|
\]
\[
\leq\underset{\lambda\in\mathcal{Z}(T), |\lambda|>N}{\sum}
\left(\underset{0\leq\eta\leq
m(\lambda,T)}{\max}|c_{\lambda,\eta}|\right)
B(r,\lambda,k)(|\lambda|+1)^k e^{r|\mathrm{Im}\lambda|}
\]
\[
+\underset{\lambda\in\mathcal{Z}(T),|\lambda|\leq N}{\sum}
\left(\underset{0\leq\eta\leq
m(\lambda,T)}{\max}|c_{\lambda,\eta}|\right)
B(R,\lambda,q)(|\lambda|+1)^q e^{R|\mathrm{Im}\lambda|}<\infty.
\]

Now we see that the statement of lemma is implied by Proposition~\ref{st2} and \eqref{eql}.
\end{proof}

\section{Formulation and Proof of the Main Result.}

\begin{theorem}\label{t31}
Assume that $T\in\mathcal{E}'(\mathbb{R}^1)$, $T\neq 0$, $f\in
C^k_T(-r,r)$, where $r>r(T)>0$. Also, assume there exists $q\in\mathbb{Z}_+$ such that
\begin{equation}\label{eqt31}
\underset{\lambda\in\mathcal{Z}(T)}{\sum}
\sigma_\lambda(\widehat{T})
B(r,\lambda,q+1)(|\lambda|+1)^{\gamma-k+q+1}
e^{r|\mathrm{Im}\lambda|}<+\infty,
\end{equation}
where $\gamma>0$ is independent of $f$, $k$, $\lambda$. Then if there exists
$N>0$ such that for $R>r$  the estimate \eqref{eql21} holds then $f\in
C^q_T(-R,R)$, where $q<k-2-\gamma$.
\end{theorem}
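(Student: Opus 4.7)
The plan is to write $f$ on $(-r,r)$ as its exponential-polynomial series, bound the coefficients via Statement~\ref{st3}(ii), feed the bound into Statement~\ref{st2} at the inner radius with smoothness index $q+1$, and then push the convergence out to $(-R,R)$ by Lemma~\ref{l21}, losing exactly one derivative in the process.

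Concretely, since $f\in\mathcal{D}'_T(-r,r)$, the general theory of mean-periodic distributions combined with Statement~\ref{st1} supplies a unique representation
\[
f=\sum_{\lambda\in\mathcal{Z}(\widehat{T})}\sum_{\eta=0}^{m(\lambda,T)}c_{\lambda,\eta}(T,f)\,e^{\lambda,\eta}
\]
in $\mathcal{D}'(-r,r)$. The hypothesis $f\in C^k_T(-r,r)$ with $r>r(T)$ lets us invoke Statement~\ref{st3}(ii): for $|\lambda|>\gamma_3$,
\[
\max_{0\le\eta\le m(\lambda,T)}|c_{\lambda,\eta}(T,f)|\le \gamma_4^{k+1}|\lambda|^{\gamma_5-k}\sigma_\lambda(\widehat{T})\Bigl(\int_{-r(T)}^{r(T)}|f^{(k)}(t)|\,dt+\gamma_6^{k}\gamma_7\Bigr),
\]
and since $k$ and $f$ are fixed, the right-hand side is a harmless constant times $|\lambda|^{\gamma_5-k}\sigma_\lambda(\widehat{T})$.

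Setting $\gamma=\gamma_5$ and substituting this majorant into the Statement~\ref{st2} hypothesis for smoothness index $k'=q+1$ at radius $r$ reduces the convergence test to
\[
\sum_{\lambda\in\mathcal{Z}(\widehat{T})}\sigma_\lambda(\widehat{T})\,B(r,\lambda,q+1)(|\lambda|+1)^{\gamma-k+q+1}e^{r|\mathrm{Im}\lambda|},
\]
which is finite by (\ref{eqt31}). Statement~\ref{st2} therefore yields convergence of the exponential-polynomial series in $C^{q+1}[-r,r]$, and coefficient uniqueness (Statement~\ref{st1}) identifies its sum with $f$ there. Finally, Lemma~\ref{l21}, applied with smoothness index $q+1$ in place of $k$ to the same pair of radii $r<R$ satisfying (\ref{eql21}), upgrades this convergence to $C^{q}[-R,R]$. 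The sum is then a $C^{q}$ function on $(-R,R)$ extending $f$, and by Statement~\ref{st1} it belongs to $\mathcal{D}'_T(-R,R)$, so $f\in C^{q}_T(-R,R)$.

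The delicate point is the middle step: one must verify that the $k$-dependent constants produced by Statement~\ref{st3}(ii) can be pulled outside the sum (they can, since $k$ is fixed) and that the factors $(|\lambda|+1)^{\gamma_5-k}$ and $(|\lambda|+1)^{q+1}$ combine to reproduce exactly the exponent $\gamma-k+q+1$ appearing in (\ref{eqt31}). The side condition $q<k-2-\gamma$ is what makes (\ref{eqt31}) compatible with the zero-density estimate (\ref{eq1_1}); it marks the non-vacuous range of $q$ to which the theorem applies.
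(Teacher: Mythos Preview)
Your proof is correct and follows essentially the same route as the paper: represent $f$ by its exponential-polynomial series (Statement~\ref{st1}), bound the coefficients via Statement~\ref{st3}(ii) with $\gamma=\gamma_5$, combine this with (\ref{eqt31}) to obtain (\ref{eq2}) for the index $q+1$ at radius $r$, and then apply Lemma~\ref{l21} to pass to $C^{q}[-R,R]$. Your write-up is in fact more explicit than the paper's about how the constants from Statement~\ref{st3}(ii) are absorbed and why the exponent in (\ref{eqt31}) arises, and about the role of the side condition $q<k-2-\gamma$.
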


\begin{proof}
The condition $ f\in C^k_T (-r, r)$ means that the function $ f $ can be represented as the series~\eqref{eq12} (see Proposition~\ref{st1}) and for $c_{\lambda,\eta} $ the estimate \eqref{eq3} follows from Proposition~\ref{st2}.
 Using~\eqref{eq3} and \eqref{eqt31} we obtain \eqref{eq2} for $ q+1 $ and $ r $. 
This means that the conditions of Lemma~\ref{l21} are satisfied, i.e. $f\in C^q_T(-R,R)$.
\end{proof}

\section{Examples of an Extension of Solutions for Some Convolvers.}

To illustrate the loss of smoothness in some cases we give
a couple of explicit examples.

 {\bf Example 1.} Consider the following form of the convolver
\begin{equation}\label{eq41}
T=
  \begin{cases}
    (r^2-t^2)^{\alpha-1/2} & \text{if }{|t|\leq r}, \\
    0 & \text{if } {|t|>r},
  \end{cases}
\end{equation}
where $\alpha>-1/2$. Then
\[
\widehat{T}(t)=\frac{c \mathcal{J}_\alpha(rt)}{t^\alpha},
\]
where $\mathcal{J}_\alpha$ is the~Bessel function and $c=c(r,\alpha)>0$.

The asymptotic behavior of zeros of the Bessel functions is known to be
$\zeta_m=\pi(m+\frac{2\alpha-1}{4})+O(\frac{1}{m})$ as
$m\rightarrow+\infty$ (see~\cite[p. 26]{V1}).
Also there is a well-known fact that
$|\mathcal{J}'_\alpha(\zeta_m)|\geq|\zeta_m|^{-1/2}$. Hence, it is easy to
see that
\begin{equation}\label{pr3}
\sigma_{\zeta_m}(\widehat{T})=\frac{c_1\zeta_m^\alpha}{\mathcal{J}'_\alpha(\zeta_m)}\leq
c_2 m^{\alpha+1/2}.
\end{equation}

\begin{theorem}\label{prop1}
If $\,f\in C_T^k[-R,R]$, where $k\in \mathbb{N}$, $T$ is defined
by formula \eqref{eq41} and $R>r$ then $f\in C_T^q(\mathbb{R})$ when
$\,q<k-(\alpha+\frac{3}{2})$.
\end{theorem}
\begin{proof}
According to the conditions of this theorem, $f\in C_T^k[-R,R]$, where $R>r$. Using
Proposition~\ref{st3}, we obtain
\begin{equation}\label{pr1}
|c_{\zeta_m}(T,f)|\leq
\frac{\gamma_1}{|\zeta_m|^k}|c_{\zeta_m}(T,f^{(k)})|=\frac{\gamma_1}{|\zeta_m|^k}\left|\int_{-r}^{r}T_{\zeta_m}(t)f^{(k)}(t)dt\right|
\end{equation}
\[
\leq
\frac{\gamma_1}{|\zeta_m|^k}\underset{[-r,r]}{\max}|T_{\zeta_m}(t)|
\int_{-r}^{r}|f^{(k)}(t)|dt \leq
\frac{\gamma_2}{|\zeta_m|^k}\sigma_{\zeta_m}(\widehat{T})
\int_{-r}^{r}|f^{(k)}(t)|dt,
\]
where $\gamma_1$, $\gamma_2>0$ is independent of $f$ and $\zeta_m$.

Consider now the condition \eqref{eq2} for $\widetilde{R}>R$. In this case it becomes 
\begin{equation}\label{pr2}
\underset{m=1}{\overset{\infty}{\sum}}
|c_{\zeta_m}|\max\{\widetilde{R},1\}(|\lambda|+1)^q<+\infty.
\end{equation}

Next, we need to find out under what conditions on $q$ and $k$, \eqref{pr2} holds.
It follows from \eqref{pr3} and \eqref{pr1} that
\[
\underset{m=1}{\overset{\infty}{\sum}}
|c_{\zeta_m}|\max\{\widetilde{R},1\}(|\lambda|+1)^q\leq
\underset{m=1}{\overset{\infty}{\sum}}
\frac{\gamma_3}{|\zeta_m|^k}\sigma_{\zeta_m}(\widehat{T})
(|\zeta_m|+1)^q 
\]
\[\leq \underset{m=1}{\overset{\infty}{\sum}}
\frac{\gamma_3}{m^{k-\alpha-1/2-q}},
\]
where $\gamma_3>0$ is independent of $\zeta_m$. That is the series \eqref{pr2} converges 
when $k-\alpha-1/2-q>1$ for every $\widetilde{R}$.  By using Proposition \ref{st2} we conclude that 
$f\in C^q_T(\mathbb{R}^1)$ for $q<k-(\alpha+3/2)$.
\end{proof}

{\bf Example 2.} Let us generalize a bit the previous example. So, set
\begin{equation}\label{eq42}
T=
  \begin{cases}
    (r^2-t^2)^{\alpha-1/2}h(t) & \text{if }{|t|\leq r}, \\
    0 & \text{if }{|t|>r},
  \end{cases}
\end{equation}
where $\alpha>-1/2$ and  $h(t)\in C^2[-r,r]$ is an even function. Then
\[
\widehat{T}(z)=\int_{-r}^r (r^2-t^2)^{\alpha-1/2}h(t)e^{izt} dt.
\]
\[
\widehat{T}'(z)=i\int_{-r}^r (r^2-t^2)^{\alpha-1/2}th(t)e^{izt} dt.
\]

The asymptotic behavior  of integrals of this kind at $z\rightarrow \infty$ is  well known (see ~\cite[p.~27]{V1}).
\begin{equation}\label{pr211}
c_{k,1} =\frac{(-1)^k\Gamma(k + \alpha)}{k!}\left(\frac{d}{dt}\right)^k((b - t)^{\beta - 1}h(t))|_{t=a},
\end{equation}
\begin{equation}\label{pr212}
c_{k,2} =\frac{(-1)^k\Gamma(k + \beta)}{k!}\left(\frac{d}{dt}\right)^k((t-a)^{\alpha - 1}h(t))|_{t=b},
\end{equation}
\begin{equation}\label{pr213}
\int_{a}^b (b-t)^{\beta-1}(t-a)^{\alpha-1}h(t)e^{izt} dt\sim e^{i(za+\alpha\pi)}\underset{k=0}{\overset{\infty}{\sum}}\frac{c_{k,1}}{(iz)^{\alpha+k}}+e^{izb}\underset{k=0}{\overset{\infty}{\sum}}\frac{c_{k,2}}{(iz)^{\beta+k}}.
\end{equation}

In the case of $\widehat{T}(z)$: $a=-r$, $b=r$, $\alpha=\beta=\alpha+1/2$. Note that as far as $h(t)$  is an even function, we have  $c_{0,1}=c_{0,2}$. Then  we rewrite the formula \eqref{pr213} in the following manner
\begin{equation}\label{pr214}
\widehat{T}(z) \sim \frac{(e^{i(-rz+(\alpha+1/2)\pi)}+e^{izr}){c_{0,1}}}{(iz)^{\alpha+1/2}}+ O\left(\frac{1}{|z|}\right).
\end{equation}

For $\,\widehat{T}'(z)$ in \eqref{pr211} and \eqref{pr212} instead of $h(t)$ one should substitute $th(t)$. In that case we get that $c_{0,1}'=-c_{0,2}'$, then formula \eqref{pr213} has the form
\begin{equation}\label{pr215}
\widehat{T}'(z) \sim \frac{i(e^{i(-rz+(\alpha+1/2)\pi)}-e^{izr}){c_{0,1}'}}{(iz)^{\alpha+1/2}}+ O\left(\frac{1}{|z|}\right).
\end{equation}

In order to find the asymptotics for the zeros of $\widehat{T}(z)$ we need to solve the equation
\[
e^{i(-rz+(\alpha+1/2)\pi)}+e^{izr}=O(1/n).
\]
The latter is equivalent to 
\[
\cos(rz-(\alpha+1/2)\pi/2)=O(1/n).
\]
Then $\zeta_n=\frac{\pi(3/2+\alpha+2n)}{2r}+O(1/n)$ ($n\rightarrow\infty$) is the asymptotic in question. Substituting $\zeta_n$ in formulas \eqref{pr214} and \eqref{pr215}, we find that
\begin{equation}\label{pr216}
\widehat{T}(\zeta_n) \sim \frac{e^{i(\alpha+1/2)\pi/2}{c_{0,1}}}{O(n^{\alpha+1/2})}O(1/n)+ O\left(\frac{1}{n}\right).
\end{equation}
\begin{equation}\label{pr217}
\widehat{T}'(\zeta_n) \sim \frac{e^{i(\alpha+1/2)\pi/2}{c_{0,1}}}{O(n^{\alpha+1/2})}O(1)+ O\left(\frac{1}{n}\right).
\end{equation}

That is, for $n\rightarrow \infty$ and taking into account formulas \eqref{pr216} and \eqref{pr217}, we see that $\widehat{T}(\zeta_n)=O(1/n)$ and $\widehat{T}'(\zeta_n)=O(1/n^{\alpha+1/2})$. Hence, we have 
\[
\sigma_{\zeta_n}(\widehat{T})=\frac{c_1}{\widehat{T}'(\zeta_n)}=O(n^{\alpha+1/2}).
\]
\begin{theorem}
If $f\in C_T^k[-R,R]$ for some $k\in \mathbb{N}$, $T$ is defined by formula~(\ref{eq42}) and $R>r$ then $f\in C_T^q(\mathbb{R})$ for
$q<k-(\alpha+3/2)$.
\end{theorem}
\begin{proof}
The proof is similar to the proof of Theorem~\ref{prop1}.
\end{proof}

\end{document}